\newtheorem{theorem}{Theorem}[section]
\newtheorem{corollary}[theorem]{Corollary} 
\theoremstyle{definition}
\newtheorem{definition}{Definition}[section]
\theoremstyle{definition}
\newtheorem{example}[definition]{Example}
\theoremstyle{remark}
\newtheorem{remark}[definition]{Remark}
\title{Existence of Circle Packings on Translation Surfaces}
\author{Anton Levonian}
\date{December 2023}
\begin{document}

\maketitle

\begin{abstract}
A translation surface is a surface formed by identifying edges of a collection of polygons in the complex plane that are parallel and of equal length using only translations. We determined that the same circle packing can be realized on varying translation surfaces in a certain stratum. We also determined possible complexities of contacts graphs and provide a bound on this complexity in some low-genus strata. Finally, we established the possibility of certain contacts graphs’ complexities in strata with genus greater than $2$.\end{abstract}

\section{Introduction}
Translation surfaces are very common and important types of Riemann surfaces, and can be defined through gluing together a collection of polygons in the complex plane without applying any rotations or reflections \cite{https://doi.org/10.48550/arxiv.1411.1827}. For example, a torus is a translation surface, formed by identifying opposite sides of a square. Translation surfaces have applications in billiards on rational polygons (\cite{https://doi.org/10.48550/arxiv.math/0609392}) and geodesic flow (\cite{Massart_2022}). The introduction of Veech surfaces in 1989 (see \cite{Veech1989}) has increased research on translation surfaces, and they have since been extensively studied (see \cite{https://doi.org/10.48550/arxiv.math/0106251, https://doi.org/10.48550/arxiv.1809.10769,https://doi.org/10.48550/arxiv.1712.09066,https://doi.org/10.48550/arxiv.2005.00099,https://doi.org/10.48550/arxiv.1411.1827,https://doi.org/10.48550/arxiv.math/0609392}).

Before we elaborate any further, we will describe a few topological and geometric properties that will be freely discussed throughout the paper. We define the \emph{genus} of a surface as the maximal number of cuts along simple closed curves that can be made without making the surface disconnected. Translation surfaces of genus greater than 2 also contain \emph{singular points}, whose geometry is different from that of other points on the surface. 

The results presented in this paper mainly concern circle packings on different translation surfaces with similar properties. We give more background on these objects in Section \ref{background}. We then settle questions regarding both the existence of packings on distinct surfaces in Section \ref{packingexistence} and the simplest packings that can be realized on certain families of translation surfaces in Section \ref{simplecontactsgraphs}. Finally, we outline directions that future research may take in Section \ref{researchdirections}.

\section{Background}
\label{background}
\subsection{Definition and Basic Properties of Translation Surfaces}
We begin by defining translation surfaces, the geometry of which will be the main subject of the paper.
\begin{definition}
    A \emph{translation surface} is formed by identifying opposite sides of polygons from $\mathcal{P}$, where $\mathcal{P}$ is a collection of several polygons in the complex plane such that $\mathcal{P}$ has an even number of sides and opposite sides of $\mathcal{P}$ are parallel and of equal length.
\end{definition}
A particular family of translation surfaces, the square-tiled surfaces, will be of importance.
\begin{definition}
    A \emph{square-tiled surface} is a translation surface for which $\mathcal{P}$ is formed by joining together opposite sides of congruent squares.
\end{definition}
Multiple polygons may define the same translation surface:
\begin{remark}
\label{equivalenceoftranslationsurfacesremark}
     Let $\mathcal{P}$ be a collection of polygons in the complex plane. Cut polygons in $\mathcal{P}$ along straight lines and translate the resulting polygons so that newly-created edges are identified to create a new collection of polygons $\mathcal{P}_1.$ After identification of opposite edges, $\mathcal{P}$ and $\mathcal{P}_1$ define the same surface (see Wright \cite{https://doi.org/10.48550/arxiv.1411.1827}).
\end{remark}
We now introduce singular points: 
\begin{definition}
    A \emph{singular point} is a point with at which the angle is greater than $2\pi$ to which multiple vertices of $\mathcal{P}$ get mapped. 
\end{definition}
\begin{definition}
    The \emph{cone angle} of a singular point is the total angle measure at this point. It is the sum of the angles of all the vertices of $\mathcal{P}$ which get mapped to the singular point.
\end{definition}

It is a well-known result, outlined by Zorich in \cite{https://doi.org/10.48550/arxiv.math/0609392}, that the total angle at any point on a translation surface will always be a multiple of $2\pi.$

\begin{definition}
    The \emph{order}, represented by $\delta,$ of a singular point $v$ with cone angle $2k\pi$ is $\delta(v)=k-1.$
\end{definition}

\begin{example}
Consider a square in the complex plane, and identify its opposite sides. This yields a torus, a surface of genus $1.$ All 4 vertices of the square will be mapped to a single point, with angle $4 \cdot \frac{\pi}{2}=2 \pi.$ Since this point's angle is not greater than $2\pi$, it is not a singular point.
\end{example}

\subsection{Genus and Stratum of Translation Surfaces}
We begin by introducing a well-known theorem that relates the genus of a translation surface to the order of its singular points
(see Massart \cite{Massart_2022}).
\begin{theorem} (Gauss-Bonnet Theorem).
\label{Gausbonnet}
Let $X$ be a translation surface with $k$ singular points $v_i$ each of order $\delta(v_i)$, and let $\chi(X)$ be the Euler characteristic of $X.$ Then the following is true: $$\sum_{i=1}^k \delta(v_i)+\chi(X)=0.$$ 
\end{theorem}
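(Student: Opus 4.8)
The plan is to prove the Gauss-Bonnet relation by computing the Euler characteristic of the surface via a cell decomposition induced by the polygonal gluing, and then comparing the angle contributions at the vertices.

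Let me sketch the standard proof. First, I would set up a CW-structure on the translation surface $X$ coming from the polygons $\mathcal{P}$. After edge identifications, the polygons become the $2$-cells (faces $F$), the identified edges become the $1$-cells (edges $E$), and the identified vertices become the $0$-cells (vertices $V$, which include all the singular points together with the non-singular vertex-images). Then $\chi(X) = V - E + F$. The relation I must establish is $\sum_i \delta(v_i) = -\chi(X) = E - V - F$.

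\medskip

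The key step is an angle-counting argument. Suppose $\mathcal{P}$ consists of faces that are polygons with a total of $N$ corners (vertices counted with multiplicity before identification). The sum of all interior angles over all the polygons can be computed in two ways. On one hand, summing the interior-angle-sum formula face by face: a polygon with $n_f$ sides has interior angle sum $(n_f - 2)\pi$, so the grand total is $\sum_f (n_f - 2)\pi = \bigl(\sum_f n_f\bigr)\pi - 2\pi F$. Since every edge of $X$ is glued from exactly two polygon sides, $\sum_f n_f = 2E$, giving a total angle of $2\pi E - 2\pi F$. On the other hand, grouping the corners by the vertex of $X$ to which they are mapped, the total angle equals $\sum_{v \in V} (\text{cone angle at } v)$. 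For a regular (non-singular) point the cone angle is $2\pi$, and for a singular point $v_i$ of order $\delta(v_i)$ the cone angle is $2(\delta(v_i)+1)\pi$ by the definition of order; using the established fact that every vertex has cone angle a multiple of $2\pi$, I can write the cone angle uniformly as $2\pi(\delta(v)+1)$ for every $v \in V$ (with $\delta = 0$ at non-singular vertices). Hence the total angle is $\sum_{v\in V} 2\pi(\delta(v)+1) = 2\pi\bigl(\sum_v \delta(v)\bigr) + 2\pi V$.

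\medskip

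I would then equate the two expressions for the total interior angle:
\[
2\pi E - 2\pi F = 2\pi\Bigl(\sum_{v} \delta(v)\Bigr) + 2\pi V.
\]
Dividing by $2\pi$ and rearranging yields $\sum_{v} \delta(v) = E - V - F = -(V - E + F) = -\chi(X)$, which is exactly the claimed identity $\sum_{i=1}^k \delta(v_i) + \chi(X) = 0$ once I note that vertices with $\delta = 0$ contribute nothing to the sum, so it reduces to the sum over the genuine singular points $v_1,\dots,v_k$.

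\medskip

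The main obstacle I anticipate is the bookkeeping at the vertices: I must argue carefully that the polygon corners partition cleanly into the vertex-classes of $X$, that each class's corner-angles sum to the cone angle, and that the gluing is by translation (so no rotation distorts the angles and the total angle around each identified point is genuinely a whole multiple of $2\pi$). Invoking the quoted result of Zorich that every cone angle is a multiple of $2\pi$ lets me write the cone angle as $2\pi(\delta(v)+1)$ uniformly and sidesteps a delicate local analysis; the remaining care is simply to ensure the edge-count relation $\sum_f n_f = 2E$ and the identification of faces with polygons are correctly stated for a possibly multi-polygon $\mathcal{P}$.
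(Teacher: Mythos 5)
Your proof is correct. Note, however, that the paper does not actually prove this statement --- it quotes the Gauss--Bonnet relation as a known result with a citation to Massart --- so there is no in-paper argument to compare against. Your argument is the standard one: take the CW decomposition of $X$ induced by the gluing, compute the total interior angle of the polygons once via $\sum_f (n_f-2)\pi = 2\pi E - 2\pi F$ (using that each $1$-cell is the image of exactly two polygon sides) and once by grouping corners into vertex classes to get $\sum_v 2\pi(\delta(v)+1) = 2\pi\sum_v \delta(v) + 2\pi V$, then equate and divide by $2\pi$ to obtain $\sum_v \delta(v) = -(V-E+F) = -\chi(X)$. The bookkeeping you flag is handled correctly: translations preserve angles, so the corner angles in each vertex class genuinely sum to the cone angle, and vertices with cone angle $2\pi$ contribute $\delta = 0$, reducing the sum to the singular points. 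This is a complete and self-contained justification of a fact the paper takes on faith.
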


Theorem \ref{Gausbonnet} will be useful for calculating the genus, $g$, of $X$ by recalling the well-known formula $$\chi(X)=2-2g.$$

\begin{example}
\label{boundary-lessRTS}
    Consider the square-tiled surface with $3$ squares shown in Figure \ref{fig2}. All of the vertices of each of the 3 squares will be mapped to a single singular point $v$ of angle $12\cdot \frac{\pi}{2}=6\pi$. So, $\delta(v)=2,$ and since $\chi(X)=-\sum \delta(v_i),$ the Euler characteristic in this case is $-2$. We now see that the genus is $g=2.$
\end{example}

\begin{figure}[htp]
\centering
\includegraphics[width=.25\textwidth]{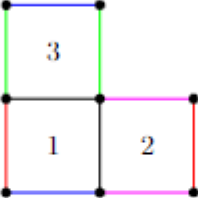}
    \caption{Genus $2$ 3-squared surface.}
    \label{fig2}
\end{figure}

One more important topic related to the genus of a translation surface is the surface's stratum.
\begin{definition}
    A \emph{stratum}, denoted by $\mathcal{H}(\kappa)$ where $\kappa$ is a partition of $\chi(X)$, is the set of all translation surfaces with singular point orders equal to the parts of $\kappa.$
\end{definition}

\begin{example}
    The surface in Example \ref{boundary-lessRTS} has 1 singular point of order $2$ and no other singular points of nonzero order, so it is a member of $\mathcal{H}(2).$ Meanwhile, the surface in Figure \ref{fig4} is a member of $\mathcal{H}(1,1),$ having 2 singular points each of order 1.

\begin{figure}[htp]
    \centering
     \includegraphics[width=.33\textwidth]{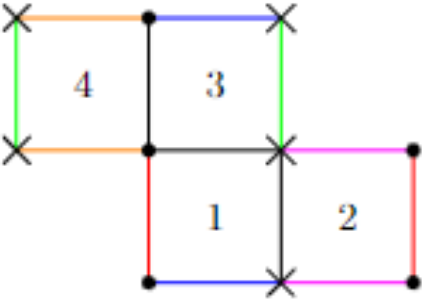}
       \caption{Translation surface in $\mathcal{H}(1,1).$}
       \label{fig4}
    
\end{figure}
Because $(1,1)$ and $(2)$ are the only partitions of $2,$ all genus $2$ translation surfaces are either in $\mathcal{H}(1,1)$ or in $\mathcal{H}(2).$
\end{example} 

\begin{example}
    All of the regular octagon's vertices are mapped to a single point, so its angle is $\pi(8-2)=6\pi.$ Hence, this point is a singular point of order $2$ and the regular octagon is in $\mathcal{H}(2).$
\end{example}
We now make the following important observation, which will be useful in Section \ref{simplecontactsgraphs}, about polygons which are mapped to $\mathcal{H}(2)$ surfaces.
\begin{remark}
\label{ststooctaremark}
    It follows from Proposition 1.16 of Wright \cite{https://doi.org/10.48550/arxiv.1411.1827} and Remark \ref{equivalenceoftranslationsurfacesremark} that every $\mathcal{H}(2)$ surface results from gluing opposite edges of a rotationally symmetric octagon in the plane. This is because any polygon forming a cylinder and a slit torus upon identification of opposite edges can be cut and reglued into an octagon with rotational symmetry, and a cutting and regluing in the other direction is also possible for every octagon. This is illustrated in Figure \ref{fig:ststooctagon} below:
    \begin{figure}[htp]
        \centering
         \begin{subfigure}[b]{0.4\textwidth}
         \centering
       \includegraphics{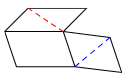}
           \end{subfigure}
           \hspace{.1\textwidth}
           \begin{subfigure}[b]{.4\textwidth}
           \centering
          \includegraphics{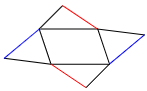}
            \end{subfigure}
        \caption{The 2 polygons above are identified to the same $\mathcal{H}(2)$ surface.}
        \label{fig:ststooctagon}
    \end{figure}

\end{remark}
\subsection{Circle Packings}
Of particular interest are circle packings, popularized by Thurston (see \cite{thurston2014three}), on translation surfaces.
\begin{definition}
\label{circlepackingdef}
    A \emph{circle packing} is a collection of circles with disjoint interiors with a \emph{contacts graph} $G$ so that each vertex of $G$ corresponds to a circle and 2 circles are tangent if and only if their corresponding vertices on $G$ are connected by an edge. 
\end{definition}

In Definition \ref{circlepackingdef}, note that the radii and placement of the circles is not determined by $G$.

\begin{example}
    Figure \ref{fig5} contains a possible circle-packing on the 3-squared surface from Figure \ref{fig2}, along with the corresponding contacts graph in Figure \ref{fig6}. We will denote this particular packing as $C_3$ throughout.
    \begin{figure}[htp]
    \centering
\includegraphics[width=.25\textwidth]{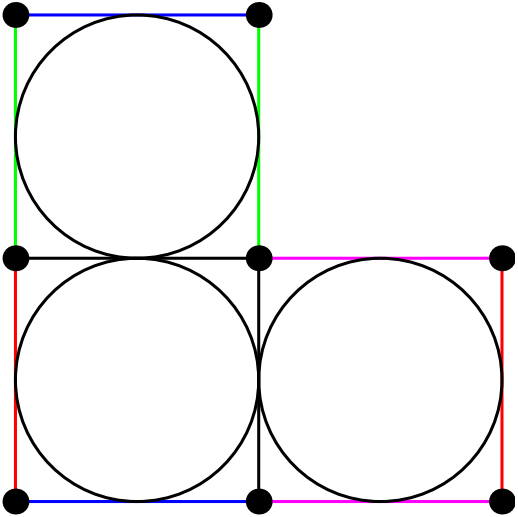}
       \caption{A circle packing $C_3$ on a surface in the complex plane.}
       \label{fig5}
       \end{figure}
\end{example}

    \begin{figure}[htp]
     \centering
     \includegraphics[width=0.3\textwidth]{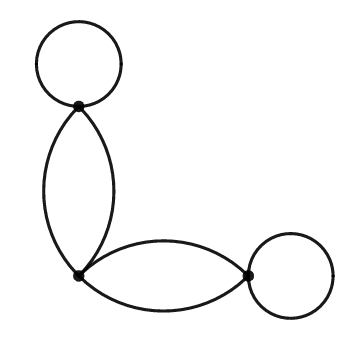}
        \caption{Contacts graph for $C_3.$}
        \label{fig6}
\end{figure}

\begin{remark}
    The metric on a translation surface is not a flat smooth metric near singular points of nonzero order (see Wright \cite{https://doi.org/10.48550/arxiv.1411.1827}). Hence, circles containing singular points do not correspond with circles in the complex plane.
\end{remark}

\begin{example}
    Figure \ref{fig7} depicts a collection of circular sectors on a 3-squared $\mathcal{P}$ in the complex plane which, when opposite sides of $\mathcal{P}$ are identified, yield a circle with radius less than $\frac{1}{2}$ centered at a singular point.

    \begin{figure}[htp]
    \centering
    \includegraphics[width=.25\textwidth]{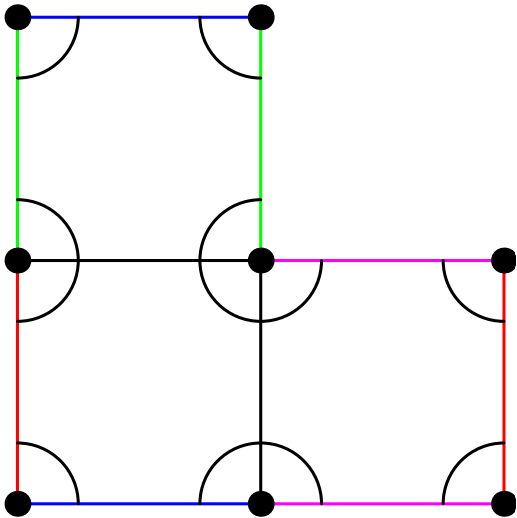}
       \caption{Visualizing a circle centered at a singular point.}
       \label{fig7}
       \end{figure}

\end{example}
We now introduce a concept that will be useful in defining circle packing equivalence.
\begin{definition}
    A \emph{tangency pattern} is the pattern of intersection of segments connecting tangency points between pairs of circles. Consider 2 circle packings $C_1$ and $C_2$ with the same contacts graph $G_C$ and 2 quadruplets of not necessarily distinct circles $(t_1,u_1,v_1,w_1)$ and $(t_2,u_2,v_2,w_2)$ in $C_1$ and $C_2$, respectively so that:
    \begin{enumerate}
        \item $(t_i,u_i)$ and $(v_i,w_i)$ are pairs of tangent circles for $i\le 2$.
        \item $t_1$ and $t_2$ correspond to the same vertex of $G_C,$ and likewise for $u_i,v_i,$ and $w_i$.
    \end{enumerate} 
    Let $t_1u_1$ represent the segment connecting tangency points of $t_1$ and $u_1,$ and likewise define $v_1w_1, t_2u_2,$ and $v_2,w_2.$
    The tangency patterns of $C_1$ and $C_2$ are the same if and only if, for all $(t_i,u_i,v_i,w_i)$ in $C_i,$ $t_1u_1$ intersects $v_1w_1$ if and only if $t_2u_2$ intersects $v_2w_2$. 
\end{definition}
\begin{example}
    In Figure \ref{fig8}, 2 circle packings on 2 different translation surfaces of the same stratum have the same contact graph and tangency patterns:
    \begin{figure}[htp]
     \centering
     \begin{subfigure}[b]{0.25\textwidth}
         \centering
         \includegraphics[width=\textwidth]{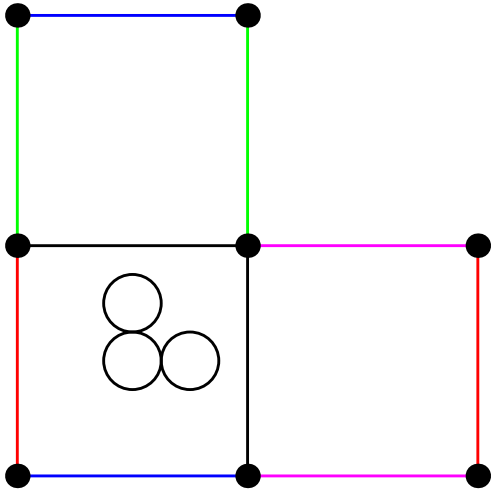}
         \caption{3-squared surface.}
         \label{fig8a}
     \end{subfigure}
     \hspace{.2\textwidth}
     \begin{subfigure}[b]{0.33\textwidth}
         \centering
         \includegraphics[width=\textwidth]{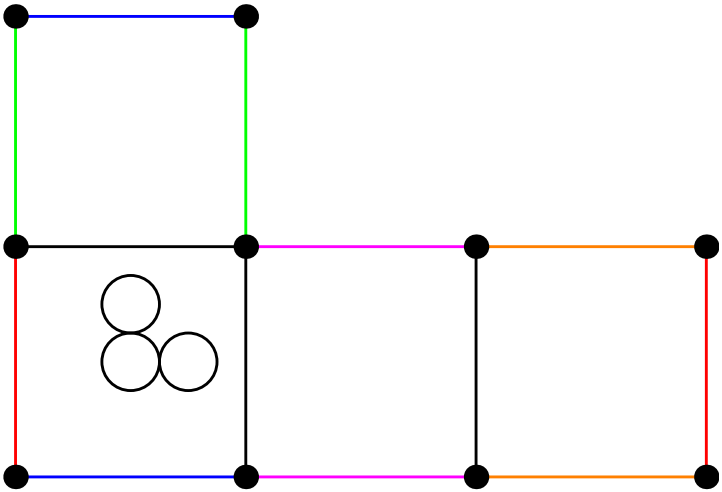}
         \caption{4-squared surface.}
         \label{fig8b}
     \end{subfigure}
     \caption{Equivalence of trivial circle packings.}
     \label{fig8}
     \end{figure}

\end{example}

The following theorem (see Stephenson \cite{stephenson2005introduction}) is instrumental in studying circle packings, and provides motivation for our explorations in Section \ref{packingexistence}.
\begin{theorem} (Koebe-Andreev-Thurston Theorem)
    Given a Riemann sphere with a graph $G,$ there exists a collection of circles on the surface whose contacts graph is $G$ such that the circles form a circle packing of the surface that is unique up to a Möbius transformation.
\end{theorem}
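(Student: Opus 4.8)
The plan is to treat this as the classical circle packing theorem, in which $G$ is taken to be a triangulation of the sphere — the hypothesis under which uniqueness up to Möbius transformations actually holds. The key observation that drives both halves of the proof is that, once the contacts graph is fixed, a packing is determined up to isometry by an assignment of a radius $r_v>0$ to each vertex $v$, subject to a purely local compatibility condition. For a triangular face with vertices $i,j,k$ carrying mutually tangent circles of radii $r_i,r_j,r_k$, the triangle joining the three centers has side lengths $r_i+r_j$, $r_j+r_k$, and $r_k+r_i$, so by the law of cosines the interior angle at the center of circle $i$ is
$$\alpha_i(r_i,r_j,r_k)=\arccos\frac{(r_i+r_j)^2+(r_i+r_k)^2-(r_j+r_k)^2}{2(r_i+r_j)(r_i+r_k)}.$$
A collection of radii yields a genuine packing precisely when, at every vertex $v$, the angles contributed by all faces incident to $v$ sum to exactly $2\pi$, so that the surrounding triangles close up without overlap around each center. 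The existence question therefore reduces to solving this system of angle-sum equations in the positive radii.

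For existence I would use the variational method. Because the mixed partials $\partial\alpha_i/\partial r_j$ are symmetric, the angle-sum conditions form a gradient system, so one can define an energy functional on the space of logarithmic radii whose critical points are exactly the solutions. I would then show this functional is convex and coercive, so that it attains a minimum, which supplies the desired radius vector. To move the resulting Euclidean configuration onto the Riemann sphere, I would delete one face, pack the remaining disk triangulation in the plane, and push the packing onto the sphere by stereographic projection, using a Möbius transformation to fix the normalization.

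For uniqueness I would run the standard maximum-principle argument. After normalizing by a Möbius transformation, suppose two radius assignments $\{r_v\}$ and $\{r_v'\}$ both satisfy the angle-sum system, and consider a vertex $v$ maximizing the ratio $r_v'/r_v$. Using the monotonicity of each $\alpha_i$ in the radii — the angle at a center decreases as that circle grows relative to its neighbors — one finds that the angle sums at $v$ cannot both equal $2\pi$ unless every ratio is equal, which forces the two packings to be isometric and hence Möbius-equivalent.

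The hard part will be the existence step, and specifically establishing coercivity of the functional while ruling out degenerations in which some radii tend to $0$ or $\infty$. On the sphere this is compounded by two features: one must first quotient out the noncompact Möbius group before any compactness argument can succeed, and the spherical angle formulas are less well-behaved than their Euclidean counterparts. Carefully controlling this boundary-and-normalization analysis is where the real work lies, whereas the uniqueness argument, once the setup is in place, is comparatively routine.
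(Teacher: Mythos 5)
This theorem is stated in the paper as background and attributed to Stephenson's book; the paper supplies no proof of its own, so there is nothing internal to compare your argument against. Judged on its own terms, your outline follows the standard route (Thurston's radius-adjustment picture, made variational in the style of Colin de Verdi\`ere, plus the maximum-principle uniqueness argument), and the individual ingredients you cite are correct: the law-of-cosines angle formula, the reduction of existence to the angle-sum system $\sum \alpha = 2\pi$ at each vertex, the monotonicity of $\alpha_i$ in the radii, and the passage to the sphere by deleting a face and stereographically projecting. You are also right to silently repair the statement: as written in the paper, with $G$ an arbitrary graph, the conclusion is false (uniqueness up to M\"obius transformations requires $G$ to be a triangulation of the sphere; for non-maximal planar graphs the packing deforms), so restricting to triangulations is necessary, not optional.

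That said, what you have is a strategy, not a proof. You explicitly defer the two steps that carry all the difficulty: (i) coercivity of the energy functional, i.e., ruling out minimizing sequences along which some $r_v \to 0$ or $r_v \to \infty$, which is where the combinatorial hypothesis on $G$ actually enters (one must show that a degenerating subset of vertices would force an angle-sum violation, and this uses planarity/triangulation structure in an essential way); and (ii) the normalization that quotients out the noncompact M\"obius action before any compactness argument can run. Without these, existence is not established. The uniqueness half is closer to complete --- on a closed triangulated sphere every vertex is interior, so the extremal-ratio argument closes up cleanly once you have reduced to a Euclidean or fixed-boundary model --- but even there you should say precisely which normalization makes the radii well-defined objects to compare. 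As a blind reconstruction of a deep classical theorem the plan is sound; as a proof it has a named, acknowledged gap at its center.
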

We now establish equivalence conditions for circle packings on translation surfaces. These conditions are not the same as those defined by the Koebe-Andreev-Thurston Theorem for spheres:
\begin{remark}
    Unlike in the case of a sphere, equality of contacts graphs is not sufficient for the equality of circle packings up to a Möbius transform on translation surfaces, as 2 circle packings with different tangency patterns are not equivalent on surfaces with genus greater than $0.$ We call 2 circle packings \emph{equivalent} if they have the same contacts graph and the same tangency patterns. 
\end{remark}
\begin{example}
The circle packing in Figure \ref{fig9} has the same contacts graph as $C_3$ on a surface of the same stratum, but has a different tangency pattern. Thus, these 2 packings are not equivalent.
 \begin{figure}[htp]
     \centering
     \includegraphics[width=0.4\textwidth]{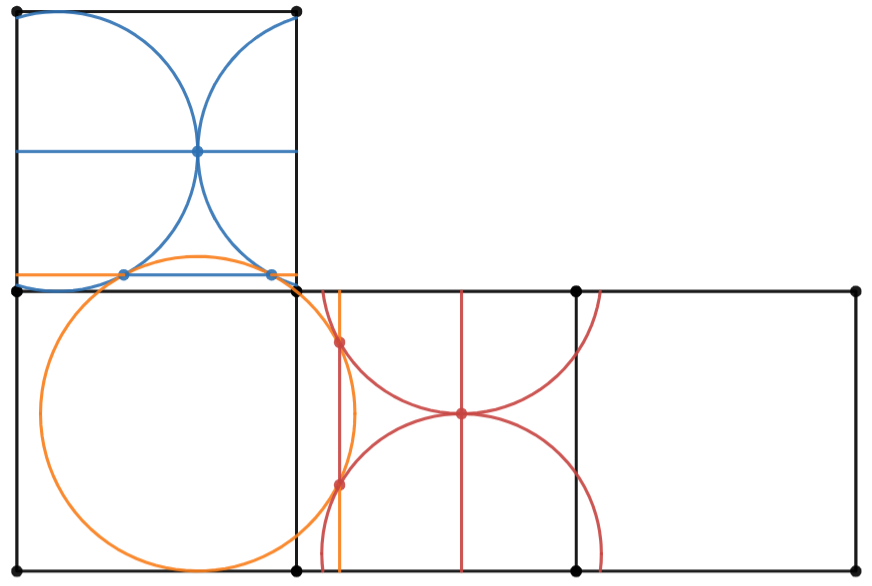}
        \caption{Illustration of non-intersecting tangency segments.}
        \label{fig9}
        \end{figure}
\end{example}

\section{Existence of Equivalent Circle Packings on Distinct Surfaces}
\label{packingexistence}
Consider a circle packing on a translation surface and its contacts graph $G$. One interesting question is whether, for all $G$, there exists a circle packing, unique up to a Möbius transformation, on a different translation surface of the same stratum with the same contacts graph and tangency patterns. In other words, do questions of existence of equivalent circle packings answered by Koebe-Andreev-Thurston Theorem on spheres hold for non-zero genus strata? We answered this question in the negative for specific surfaces in the $\mathcal{H}(2)$ stratum.

\begin{theorem}
    An equivalent packing to $C_3$ cannot be realized on any 4-squared translation surface in $\mathcal{H}(2)$ without applying an affine transformation. 
\end{theorem}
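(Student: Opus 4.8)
The plan is to argue by contradiction. Suppose some $4$-squared surface $X\in\mathcal{H}(2)$ carried a packing equivalent to $C_3$; I would first translate the combinatorial data ``equivalent to $C_3$'' into rigid metric information. Equivalence preserves the contacts graph, so any such packing is a triple of circles tangent in exactly the pattern recorded in Figure \ref{fig6}; equivalence also preserves the tangency pattern, so the three segments joining the centers of the tangent pairs must \emph{pairwise intersect}. On a flat surface three center-to-center geodesics forming a triangle cannot cross one another unless they wind around a cone point, so the crossings force these geodesics to loop around the single order-$2$ singularity (cone angle $6\pi$). Thus an equivalent packing is precisely a triple of round circles, mutually tangent as in $C_3$, whose center geodesics all wind once around the singular point; this is the feature that distinguishes $C_3$ from the non-crossing packing of Figure \ref{fig9}.

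Next I would enumerate the $4$-squared surfaces in $\mathcal{H}(2)$. By the Gauss--Bonnet Theorem (Theorem \ref{Gausbonnet}) these are genus-$2$ square-tiled surfaces with a single cone point of angle $6\pi$, and up to relabeling there are only finitely many, each encoded by a pair of permutations of the four squares whose commutator produces the single order-$2$ singularity. For each I would locate the singular point in the square grid and ask whether three round circles can be placed so that their mutual-tangency geodesics wrap around it. Unfolding the $6\pi$ cone into its three $2\pi$-sheets shows that, for all three center geodesics to cross, the three centers must sit at angular positions roughly $2\pi$ apart around the singularity, i.e.\ one circle per sheet; the radii then satisfy the tangency equations $d_{\mathrm{geo}}(c_i,c_j)=r_i+r_j$ together with the requirement that the open disks embed disjointly.

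The main idea for the contradiction, and the step I expect to be hardest, is to show that this winding configuration forces an order-$3$ rotational structure about the singularity: the rotation by $2\pi$ inside the $6\pi$ cone should interchange the three sheets, hence the three circles and their winding geodesics, so $X$ would admit an order-$3$ translation automorphism fixing the singular point. A nontrivial translation automorphism of a square-tiled surface acts freely on the squares (a fixed square would force the identity), so its order divides the number of squares; since $3\nmid 4$, no $4$-squared surface admits such a map, whereas the $3$-squared surface does (the rotation cyclically permutes its three squares), which is exactly why $C_3$ exists there. The delicate point is the rigidity claim itself: a priori an asymmetric triple of circles of unequal radii might also produce three pairwise crossings without respecting the symmetry, so ruling out the asymmetric possibilities — by the metric inequalities coming from tangency and disjoint embedding on each of the finitely many $4$-squared surfaces — is where the bulk of the technical work will lie, and I would fall back on this direct case check if the clean symmetry argument cannot be made to force the conclusion outright.

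Finally I would address the affine caveat. An element of $GL(2,\mathbb{R})$ can shear and rescale a $4$-squared surface until its proportions match the $3$-squared one (compatibly with Remark \ref{ststooctaremark}), reinstating a configuration combinatorially identical to $C_3$; but any such map sends round circles to ellipses and so does not yield a genuine circle packing. Hence the obstruction is exactly to realizing the winding pattern of $C_3$ by \emph{round} circles, and the statement holds only ``without applying an affine transformation,'' as claimed.
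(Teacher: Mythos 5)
There is a genuine gap, and it sits at the center of your argument. First, you have translated ``same tangency pattern'' into a condition on \emph{center-to-center} geodesics, but the paper's definition concerns segments joining \emph{tangency points} (including the chord determined by a self-tangency); the contacts graph of $C_3$ is not a triangle of three mutually tangent circles but contains loops and a double edge, and it is exactly these self-tangencies that carry the metric rigidity. The paper's proof exploits this directly: after using Remark \ref{equivalenceoftranslationsurfacesremark} to reduce to a single L-shaped $4$-squared polygon, it first disposes of the case where a circle contains the singular point (the tangency pattern is then wrong), and in the remaining case the self-tangency conditions force two of the circles to have radius $\frac{1}{2}$, the tangency pattern pins down the third circle up to its radius, and a short computation ($\sqrt{2}+1<3$, against the maximal radius $\frac{1}{\sqrt{2}}$) shows the required second tangency between two of the circles cannot occur. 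Your proposal never extracts these radius constraints, so it has no quantitative foothold.

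Second, the step you yourself flag as hardest --- that the crossing configuration forces an order-$3$ translation automorphism fixing the cone point, whence $3\nmid 4$ gives a contradiction --- is not proved, and there is no reason to expect it to hold: nothing in the definition of equivalence imposes a symmetry on the packing (the circles of $C_3$ have different radii, and the crossing condition is purely topological), and your auxiliary claim that a nontrivial translation automorphism of a square-tiled surface acts freely on the squares is also unjustified. Your stated fallback, a direct metric case check over the finitely many $4$-squared surfaces, is precisely the content a proof would need to supply, so as written the proposal defers rather than closes the argument. The one point you handle correctly is the role of the affine caveat, which matches the paper's remark that a vertical stretch by $\frac{4}{3}$ does admit an equivalent packing.
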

\begin{proof}

We start by noting that having any of the circles contain a singular point cannot yield an equivalent packing to $C_3$ on a 4-squared surface, as shown in Figure \ref{figc3singpoint}. Due to the equivalence outlined in Remark \ref{equivalenceoftranslationsurfacesremark}, we can, without loss of generality, only consider 4-squared surfaces formed by joining 3 squares in a horizontal strip, and then vertically identifying a fourth square to the left-most square in the strip, as in Figure \ref{figc3singpoint}.
   \begin{figure}[htp]
     \centering
     \includegraphics[width=0.4\textwidth]{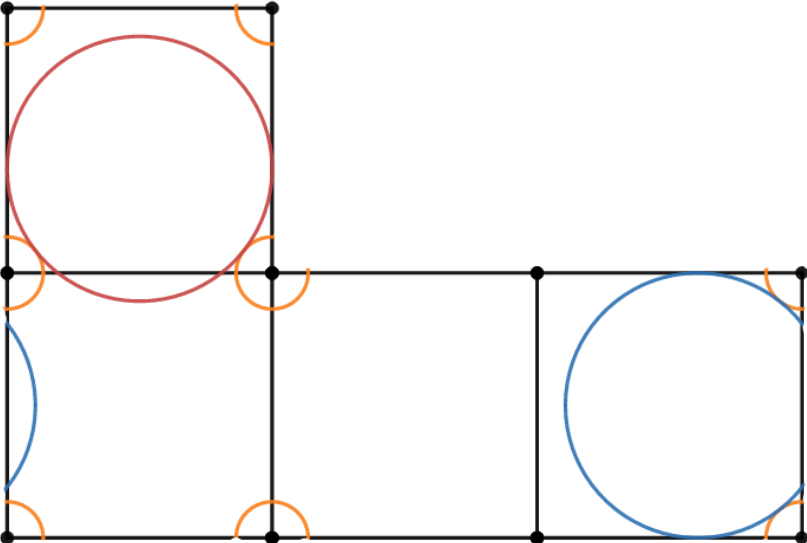}
        \caption{Packing on a 4-squared surface with the same contacts graph as $C_3.$}
        \label{figc3singpoint}
\end{figure}

While the contacts graph in Figure \ref{figc3singpoint} is the same as that of $C_3,$ the 2 packings have differing tangency patterns, since the segment connecting the self-tangency point of the red circle does not intersect the segment connecting the mutual tangency points between the red and orange circles. Since the corresponding segments do intersect in $C_3$, the packing in Figure \ref{figc3singpoint} and $C_3$ are therefore not equivalent. Observe that it was not necessary to center the orange circle at the singular point as was done above.

It remains to observe the effect of not allowing any circles to contain singular points in their interiors. 
\begin{figure}[htp]
    \centering
    \includegraphics[width=.4\textwidth]{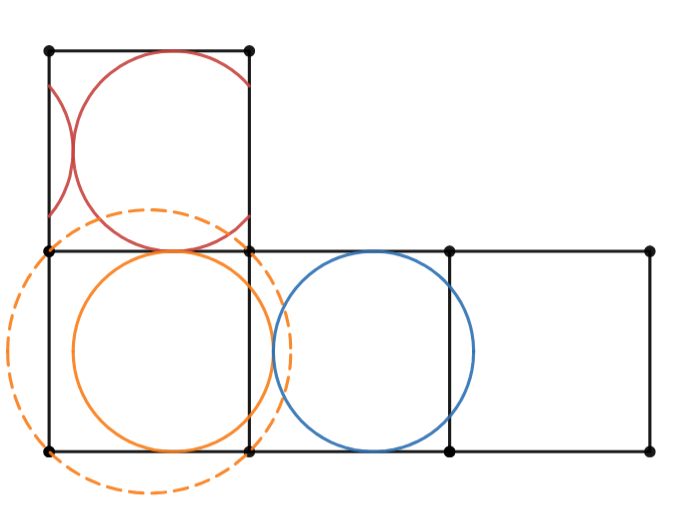}
    \caption{Configuration with no circles containing singular points.}
    \label{figC3IMPOSSIBLEGENERALCASE}
\end{figure}

    In Figure \ref{figC3IMPOSSIBLEGENERALCASE}, the red and blue circles must have radius $\frac{1}{2}$, due to the self-tangency conditions. The orange circle is tangent to both circles, with the tangency patterns preserved for the red circle. In fact, this is the only unique configuration that satisfies the tangency patterns with no circles containing singular points.

However, the blue circle is not tangent to the orange circle a second time, as it should be in order to have the given contacts graph. In fact, the orange circle cannot possibly meet this tangency criterion. The largest possible radius for the orange circle is $\frac{1}{\sqrt{2}}$, depicted as a dashed line in Figure \ref{figC3IMPOSSIBLEGENERALCASE}. Even then, the sums of the diameters of the dashed orange circle and the blue circle are $\sqrt2+1<3,$ so there will only be 1 tangency point even with maximal circle radii.

 So, it is impossible to realize an equivalent packing to $C_3$ on a 4-squared surface of $\mathcal{H}(2)$ if an affine transformation is not applied to this surface.
\end{proof}

\begin{remark}
    In fact, stretching out the 4-squared surface in $\mathcal{H}(2)$ vertically by a factor of $\frac{4}{3}$ allows for an equivalent packing to $C_3$, shown in Figure \ref{fig12}. 

  \begin{figure}[htp]
     \centering
     \includegraphics[width=0.4\textwidth]{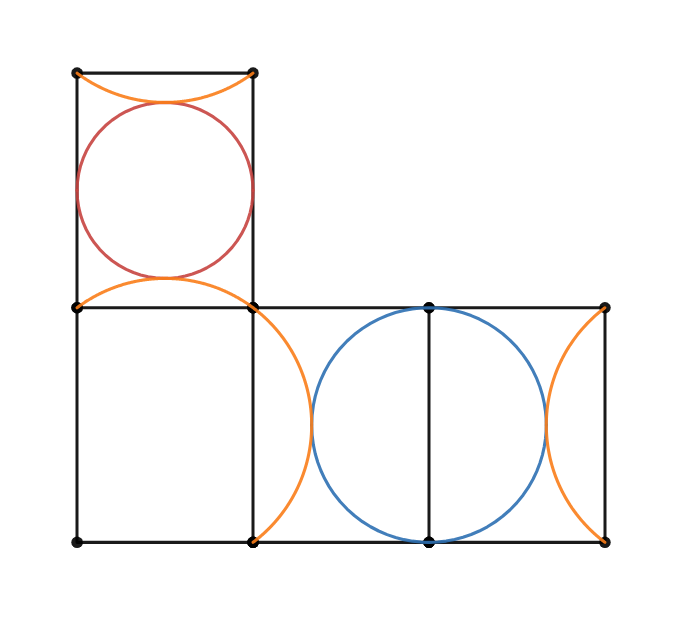}
        \caption{Note that the tangency patterns are the same as in $C_3$.}
        \label{fig12}
        
\end{figure}

\end{remark}
    
\section{Simplest Achievable Contacts Graphs}
\label{simplecontactsgraphs}
Consider the stratum $\mathcal{H}(2)$ and a contacts graph on a translation surface in that stratum. One question of particular interest is how simple this contacts graph can be. We will now answer 2 fundamental questions: first, how many loops can a single vertex of the graph have, and second, how many edges can possibly connect 2 vertices. 

Theorems \ref{H(2)multiedgesthm} and \ref{H(2)multiloopsthm} provide strict bounds on the amount of possible edges between 2 vertices (multi-edges) and loops on a single vertex (multi-loops) in contacts graphs on $\mathcal{H}(2)$ surfaces:
\begin{theorem}
\label{H(2)multiedgesthm}
    A maximum of $8$ multi-edges can be realized on any contacts graph in $\mathcal{H}(2).$
\end{theorem}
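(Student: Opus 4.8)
The plan is to reduce any $\mathcal{H}(2)$ surface to the concrete octagon model and then count, in the developing picture, how many times two fixed circles can be tangent. By Remark~\ref{ststooctaremark}, every surface $X \in \mathcal{H}(2)$ is obtained by gluing the four pairs of opposite sides of a rotationally symmetric octagon, so there are exactly $2g = 4$ identifying translations $v_1,\dots,v_4$ generating the translation (holonomy) group $\Lambda \subset \mathbb{C}$. Since $X$ is flat away from its single singular point and the holonomy consists only of translations, a circle $A$ of small enough radius develops to an honest round disk $\tilde A$ in the plane, and the same holds for $B$. I would first record that a tangency between the two distinct packing circles $A$ and $B$ on $X$ corresponds exactly to a translate $\tilde B + \gamma$, with $\gamma \in \Lambda$, that is \emph{externally} tangent to $\tilde A$ (external because, in a circle packing, $A$ and $B$ have disjoint interiors), and that distinct tangency points on $X$ correspond to distinct such $\gamma$.

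The heart of the argument is then a counting statement: among all $\gamma \in \Lambda$, at most eight give a translate $\tilde B + \gamma$ tangent to $\tilde A$ while keeping every translate of $B$ disjoint, as the lifts of an embedded disk must be. I would organize this around the four identifying translations: the translates of $B$ that can actually touch $\tilde A$ are the ``neighboring'' copies, reached by crossing one of the four side-pairs of the octagon in one of its two directions, which accounts for the value $2 \cdot 2g = 4g = 8$. Concretely, I would place the centers of the tangent copies of $B$ on the circle of radius $r_A + r_B$ about the center of $\tilde A$, in cyclic angular order around $\tilde A$ (the total angle at the regular point $c_A$ being $2\pi$), and argue that consecutive tangent copies are separated by the passage through a distinct oriented side-identification of the octagon, so their number cannot exceed the number $8$ of oriented side-pairs. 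I would finish by exhibiting an explicit packing on a suitable octagon in which $A$ and $B$ are tangent eight times, matching the bound; this is the same style of explicit construction used for $C_3$ and the equivalence figures earlier in the paper.

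The step I expect to be the main obstacle is making the counting radius-independent. A naive ``kissing'' estimate --- how many disjoint radius-$r_B$ disks can touch a radius-$r_A$ disk --- depends on the ratio $r_B/(r_A+r_B)$ and is not uniformly $8$, so the bound cannot come from angular packing alone. The real content is that the \emph{global} identification structure of the genus-$2$ octagon, rather than local angle, is what limits the tangent translates: I must show that only the neighboring translates across the four side-pairs can be simultaneously tangent to $\tilde A$ without forcing two copies of $B$ (or a copy of $B$ and a copy of $A$) to overlap, violating the disjoint-interior condition. Pinning down this correspondence between tangent translates and oriented side-identifications --- and verifying that no further ``wrapped'' tangency can be inserted --- is where the specific constant $8 = 4g$ is earned, and it is the part of the proof I would develop most carefully.
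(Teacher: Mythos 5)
Your reduction to the rotationally symmetric octagon matches the paper's first step, but the core of your argument has a genuine gap: you assume from the outset that both circles ``develop to honest round disks in the plane,'' i.e.\ that neither circle contains the singular point. On a translation surface a packing circle may well contain (or be centered at) the cone point of angle $6\pi$ --- the paper's Figure \ref{fig7} illustrates exactly this --- and such a circle is \emph{not} a round disk in the developed picture; it appears as $8$ circular sectors, one at each vertex of the octagon, all identified to the single singularity. The paper's proof splits into precisely these two cases, and the extremal value $8$ is attained only in the singular case: the second (round) circle can be tangent to each of the $8$ sectors once. When both circles avoid the singular point, the paper counts at most $5$ tangencies (one in the octagon's interior plus one along each of the four pairs of identified sides), so your plan to ``finish by exhibiting an explicit packing in which $A$ and $B$ are tangent eight times'' within the two-round-disks framework cannot succeed --- no such configuration exists.

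Your holonomy-translate count $2\cdot 2g = 8$ lands on the right number, but by the wrong mechanism: the paper's $8$ comes from the $8$ sectors of cone angle $6\pi$ at the singular point, not from the $8$ oriented side-identifications acting on an embedded disk. You also correctly flag that your ``neighboring translates'' step is the weak point (ruling out tangencies via composite translations like $v_1+v_2$ on a long thin octagon), and that remains unproved in your sketch; but even granting it, your argument would at best bound the nonsingular case, which is not where the maximum lives. To repair the proof, add the case analysis on whether a circle contains the singular point, and treat the singular circle as a union of sectors at the octagon's vertices rather than as a translate-invariant family of round disks.
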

\begin{proof}
    The maximal configuration with $8$ multi-edges is achieved on Figure \ref{fig13} below. The vertices of the contacts graph corresponding to the red and blue circles of the circle packing have $8$ edges connecting them. 
  \begin{figure}[htp]
     \centering
     \includegraphics[width=0.4\textwidth]{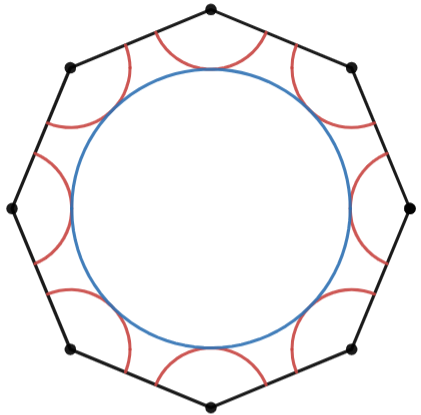}
     \caption{The configuration with maximum multi-edges before gluing of opposite sides.}
     \label{fig13}
\end{figure}

We now demonstrate that no more than $8$ multi-edges can be realized. By Remark \ref{ststooctaremark}, the only polygons on which we must analyze circle packings are octagons with at least 1 axis of rotational symmetry. Having 1 of the circles contain a singular point cannot produce more than $8$ multi-edges, as the second circle can only be tangent to each of the $8$ circular sectors in the complex plane once. If neither circle contains a singular point, there can be a maximum of $5$ multi-edges: 1 in the interior, and 1 along each pair of opposite sides of the octagon. 

Thus, at most $8$ multi-edges can be realized on any contacts graph in $\mathcal{H}(2)$, as desired.

\end{proof}
\begin{theorem}
\label{H(2)multiloopsthm}
     A maximum of $9$ multi-loops can be realized on any contacts graph in $\mathcal{H}(2).$
\end{theorem}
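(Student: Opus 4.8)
The plan is to mirror the strategy of Theorem~\ref{H(2)multiedgesthm}. First I would invoke Remark~\ref{ststooctaremark} to reduce the whole question to a single circle drawn on a rotationally symmetric octagon, since every $\mathcal{H}(2)$ surface arises this way. In this model a multi-loop is a self-tangency of one circle, which can only be created by the edge identifications: two arcs of the circle's image in the plane, lying on a pair of opposite edges or meeting at the singular point, touch after gluing. Having fixed this model I would then exhibit an explicit extremal configuration realizing $9$ loops, analogous to the maximal multi-edge picture of Figure~\ref{fig13}, most naturally by placing a single circle enclosing the singular point and drawn snugly against the boundary; direct inspection of this configuration gives the lower bound ``$\ge 9$''.

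For the upper bound I would split into the same two cases used for multi-edges. In Case~1 the circle contains the singular point, so it is represented in the plane by the circular sectors filling the cone angle $6\pi$ at the eight vertices of the octagon. A loop occurs when two of these sectors become tangent after gluing, and because the sectors are arranged cyclically around the cone point I would count the contacts between consecutive (identified) sectors. The decisive difference from Theorem~\ref{H(2)multiedgesthm} is that there the two tangent circles were distinct, capping the count at $8$, whereas here the single circle's own sectors close up cyclically, and I would argue that this closing permits exactly one additional contact, yielding $9$. In Case~2 the circle avoids the singular point and is a genuine round circle; a loop can then only form across an identification of a pair of opposite edges, and with only four such pairs I would bound the number of self-tangencies and verify it does not exceed the Case~1 count.

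The main obstacle I anticipate is the precise accounting in Case~1. Unlike the multi-edge situation, a single self-tangent circle forces me to compare its own sectors pairwise, and I must rule out spurious ``tangencies'' that are artifacts of the planar development rather than genuine contacts on the surface, while simultaneously confirming that the cyclic gluing really does realize the ninth loop that the two-circle bound forbids. In particular, establishing that $9$ — and not $8$ or $10$ — is the exact maximum rests on showing which sector-pairs can be made tangent \emph{simultaneously} subject to the disjoint-interior requirement of Definition~\ref{circlepackingdef}; this feasibility constraint, rather than any single tangency count, is where the argument will need the most care.
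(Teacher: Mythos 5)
Your setup matches the paper's: both reduce via Remark~\ref{ststooctaremark} to a single circle on a rotationally symmetric octagon, exhibit an explicit $9$-loop configuration for the lower bound, and then argue an upper bound by cases. However, your upper-bound argument in the case where the circle contains the singular point has a genuine gap, and it is exactly the gap you flag at the end. Your proposed count is ``$8$ as in the two-circle bound, plus $1$ for the cyclic closing of the sectors,'' but this is a heuristic analogy, not a geometric count: the self-tangencies of the sectors are not in bijection with anything counted in Theorem~\ref{H(2)multiedgesthm}, and nothing in your argument rules out $10$. The paper's actual count is structured differently: it is $4+5$, not $8+1$. Specifically, the paper does casework on the number of \emph{concave} vertices of the octagon ($0$, $2$, or $4$ --- a parameter your two cases ignore entirely, even though it controls which sector pairs can meet in the interior). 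In the extremal $4$-concavity case, at most $4$ self-tangencies occur on the edges (one per pair of opposite sides), and the interior self-tangencies are arcs of $4$ equal-radius circles centered at the alternating concave vertices, which can have at most $5$ mutual tangency points --- a concrete planar fact that is the real engine of the bound. Without this (or an equivalent) simultaneous-feasibility lemma, your Case~1 does not close, and your Case~2 bound of ``at most the Case~1 count'' inherits the same unresolved dependence.

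To repair the proposal you would need to (i) stratify the rotationally symmetric octagons by their number of concavities, and (ii) supply the interior-tangency bound: that the arcs of the sectors meeting in the octagon's interior lie on at most $4$ congruent circles, and that $4$ congruent circles in the plane admit at most $5$ mutual tangencies. Your instinct that the difficulty lies in which tangencies can be realized \emph{simultaneously} is correct; the missing content is the specific combinatorial-geometric fact that resolves it.
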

\begin{proof}

Figure \ref{fig88} displays a packing that, when opposite sides are identified, yields the maximum amount of realizable multi-loops in the $\mathcal{H}(2)$ stratum.

\begin{figure}[htp]
     \centering
     \includegraphics[width=0.4\textwidth]{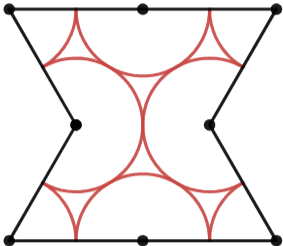}
     \caption{The configuration with maximum multi-loops before gluing of opposite sides.}
     \label{fig88}
\end{figure}

To demonstrate that $10$ multi-loops are not realizable, we examine circular sectors on rotationally symmetric octagons in the plane which are mapped to circle packings on $\mathcal{H}(2)$ surfaces. As in the proof of Theorem \ref{H(2)multiedgesthm}, only packings resulting from gluing of such octagons need to be analyzed by Remark \ref{ststooctaremark}. 

We now perform casework based on the number of vertices of the octagon whose interior angles are greater than $\pi$, which we call \emph{concavities} throughout the proof. There can only be an even number of these, as opposite sides must be parallel. There can also be no more than $4$ concavities, as the sum of the interior angles of an octagon with $6$ concavities is greater than $6\pi.$
\begin{enumerate}
    \item If there are no concavities, there are only 4 possible multi-loops corresponding to $4$ pairs of opposite sides of the octagon.
    \item If there are 2 concavities, the vertices with angles greater than $\pi$ must be opposite each other. A circle that does not contain the singular point can only have at most $4$ multi-loops by the argument in the previous case. If the circle contains a singular point, there can be at most $1$ self-tangency in the interior of the octagon and $4$ self-tangencies on the edges, which again correspond to $4$ pairs of opposite sides, for a maximum of $5$ multi-loops in this case.
    \item Finally, if there are 4 concavities, we again maximize the amount of multi-loops with a circle containing the singular point. There can be a maximum of 4 self-tangencies on the edges of the polygon and 5 in the interior, since it is well-known that 4 circles with the same radius in the plane can have a maximum of 5 mutual tangency points. In fact, the only such configuration (unique up to an affine transformation in the plane) is pictured in Figure \ref{9touchings}, and placing alternating vertices of the octagon at the centers of these circles yields the configuration shown in Figure \ref{fig88}. This case therefore yields a maximum of $9$ multi-loops. 
    \begin{figure}[htp]
     \centering
     \includegraphics[width=0.25\textwidth]{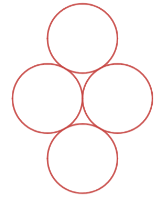}
     \caption{Configuration of 4 circles in the plane with 5 mutual tangencies.}
     \label{9touchings}
\end{figure}

\end{enumerate} 

To conclude, 9 is the maximum possible number of multi-loops of any circle packing on any surface in $\mathcal{H}(2)$.
\end{proof}
Theorems \ref{4gmultiloopsthm} and \ref{4gmultiedgesthm} demonstrate that certain numbers of multi-edges and multi-loops can be realized on contacts graphs of packings on general genus $g$ translation surfaces with 1 singular point.
\begin{theorem}
\label{4gmultiloopsthm}
It is possible to realize $4g$ multi-loops on a contacts graph on at least 1 surface of the genus $g$ stratum $\mathcal{H}(2g-2).$
\end{theorem}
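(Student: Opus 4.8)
The plan is to exhibit one explicit surface in $\mathcal{H}(2g-2)$ together with a single circle whose contacts graph is a single vertex carrying $4g$ loops. A natural candidate, generalizing the rotationally symmetric octagon of Remark \ref{ststooctaremark}, is a rotationally symmetric $4g$-gon $\mathcal{P}$ with its $2g$ pairs of opposite sides identified by translation. The first step is to verify, using the Gauss--Bonnet Theorem (Theorem \ref{Gausbonnet}), that this surface lies in the desired stratum: all $4g$ vertices are identified to a single point whose cone angle is $4g\cdot\frac{(2g-1)\pi}{2g}=(4g-2)\pi$, so the unique singular point has order $2g-2$ and $\chi(X)=2-2g$, which gives genus $g$ and membership in $\mathcal{H}(2g-2)$ as required.

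Next I would place a single circle centered at this singular point. As in Figure \ref{fig7}, such a circle appears in the plane as $4g$ circular sectors, one at each vertex of $\mathcal{P}$, and these glue up consistently because the cone angle is a multiple of $2\pi$. The self-tangencies of this circle, which are exactly the loops of its contacts graph, arise in two ways. Along the boundary, the two sector-arcs adjacent to an identified pair of opposite sides become tangent at the common image of the two side-midpoints once the radius equals half the side length; the $2g$ pairs of opposite sides thus furnish $2g$ boundary self-tangencies. The remaining $2g$ loops must be produced in the interior, where sector-arcs centered at two vertices placed at distance equal to the diameter become tangent. Here I would exploit the freedom in the shape of $\mathcal{P}$ within the stratum to position $2g$ further pairs of vertices at mutual tangency, exactly in the spirit of the $\mathcal{H}(2)$ argument of Theorem \ref{H(2)multiloopsthm}, where interior self-tangencies were realized by clustering alternate vertices of the octagon.

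I expect the main obstacle to be realizing all $4g$ tangencies simultaneously: a single radius and a single polygon shape must force the $2g$ boundary tangencies and the $2g$ interior tangencies to occur at once, while the sector interiors remain disjoint (so that the circle is an honest member of a packing) and $\mathcal{P}$ retains its rotational symmetry with all vertices at one cone point, so that by Remark \ref{equivalenceoftranslationsurfacesremark} it still defines a surface in $\mathcal{H}(2g-2)$. To control this I would reduce to a single fundamental wedge under the rotational symmetry, fix the radius first, and then solve for the vertex positions, checking that clustering vertices to create the interior tangencies neither merges nor destroys the boundary tangencies and does not alter the cone angle.

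A second, possibly cleaner, route is induction on $g$. One would start from an $\mathcal{H}(2)$ configuration realizing $8=4\cdot 2$ loops, which follows from the construction underlying Theorem \ref{H(2)multiloopsthm}, and then show that attaching a pair of squares near the singular point produces a surface in $\mathcal{H}(2g)$ with one additional saddle connection pair and exactly four new self-tangencies, carrying $4g$ loops in $\mathcal{H}(2g-2)$ to $4(g+1)$ loops in $\mathcal{H}(2g)$. The delicate point in this approach is verifying that the attachment keeps a single cone point and that the four new edges are genuine loops at it while the previously realized tangencies survive.
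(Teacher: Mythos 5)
Your first route is the same strategy the paper uses---a single circle centered at the unique cone point of a rotationally symmetric $4g$-gon, picking up $2g$ self-tangencies at identified midpoints of opposite sides and $2g$ more in the interior---but the proof is not complete, because the step you yourself flag as ``the main obstacle'' is exactly the content of the theorem and is left unresolved. You never exhibit a polygon on which the $2g$ boundary tangencies and the $2g$ interior tangencies occur simultaneously for a single radius; you only assert that one should be able to ``solve for the vertex positions.'' The paper closes this gap with a concrete construction: start from a regular $2g$-gon $\mathcal{Q}_0=A_2A_4\cdots A_{4g}$ and erect an equilateral triangle outward on each side, producing a $4g$-gon $\mathcal{Q}=A_1A_2\cdots A_{4g}$ all of whose sides \emph{and} all of whose short diagonals $A_{2i}A_{2i+2}$ have the same length $A_1A_2$. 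Taking every sector radius equal to $\tfrac{A_1A_2}{2}$ then forces tangencies at the midpoints of all $4g$ sides of $\mathcal{Q}$ (giving $2g$ points after identification) and at the midpoints of the $2g$ interior diagonals $A_{2i}A_{2i+2}$, for $4g$ loops in total; the sector angles ($\tfrac{\pi}{3}$ at odd vertices, $\bigl(\tfrac{2g-2}{2g}+\tfrac{2}{3}\bigr)\pi$ at even ones) sum to $(4g-2)\pi$, confirming a single cone point of order $2g-2$. Without some such explicit choice, your argument establishes only that $2g$ boundary loops are achievable, not $4g$.

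Your alternative inductive route has the same defect in sharper form: the claim that ``attaching a pair of squares near the singular point'' raises the genus by one, keeps a single cone point, preserves all existing tangencies, and creates exactly four new loops is asserted without any construction or verification, and it is not obvious how to add $4\pi$ of cone angle at the existing singularity while keeping the packed circle embedded. I would abandon that route and instead supply the explicit polygon above (or an equivalent one) in your first argument.
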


\begin{proof}
    Consider the regular polygon $\mathcal{Q}_0=A_2A_4\cdots A_{4g}$ with $2g$ vertices in the complex plane. For each side $\overline{A_{2i}A_{2i+2}}$ of $\mathcal{Q}_0$, construct the equilateral triangle $A_{2i}A_{2i+1}A_{2i+2}$ so that $A_{2i+1}$ is not in the interior of $\mathcal{Q}_0.$ Let $\mathcal{Q}=A_1A_2\cdots A_{4g-1}A_{4g}.$ 
    
    For each $2i+1,$ center a circular sector $C_{2i+1}$ contained in $\mathcal{Q}$ with angle measure $\frac{\pi}{3}$ and radius $\frac{A_1A_2}{2}$ at $A_{2i+1},$ and for each $2i,$ center a circular sector $C_{2i}$ contained in $\mathcal{Q}$ with angle measure $\left(\frac{2g-2}{2g}+\frac{2}{3} \right) \pi$ and radius $\frac{A_1A_2}{2}$ at $A_{2i}.$ Observe that members of the set $\{C_n | 1\le n \le 4g \}$ have $2$ mutual tangency points for each of the $2g$ pairs of opposite sides of $\mathcal{Q}$, as well as $2g$ mutual tangency points at the midpoints of the $2g$ sides of $\mathcal{Q}_0.$
    
    Now, we identify the opposite sides of $\mathcal{Q}$ to form a surface $X_{\mathcal{Q}}$ in the $\mathcal{H}(2g-2)$ stratum: $X_{\mathcal{Q}}$ will have exactly 1 singular point of cone angle $(4g-2)\pi$ and, consequently, of order $2g-2$. Note that all sectors $C_n$ get mapped to a single circle $C$ centered at the singular point of $X_{\mathcal{Q}}$. The number of self-tangencies of $C$ is equal to the number of unique tangency points between members of $\{C_n | 1\le n \le 4g \}.$ Note that each of the $2g$ pairs of tangency points in the complex plane that are at midpoints of pairs of opposite sides of $\mathcal{Q}$ get identified to 1 point on the surface, contributing $2g$ to the number of self-tangencies of $C$. The tangency points at midpoints of the $2g$ sides of $\mathcal{Q}_0$ are in the interior of $\mathcal{Q}$, and so contribute another $2g$ to $C$'s self-tangencies. Thus, it is possible for a circle packing realized on a surface in $\mathcal{H}(2g-2)$ to have $4g$ self-tangencies, as desired.
\end{proof}

\begin{theorem}
\label{4gmultiedgesthm}
It is possible to realize $4g$ multi-edges on a contacts graph on at least 1 surface of the genus $g$ stratum $\mathcal{H}(2g-2).$
\end{theorem}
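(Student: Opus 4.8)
The plan is to mirror the construction used in the proof of Theorem \ref{4gmultiloopsthm}, but to arrange that the relevant tangencies occur between \emph{two distinct} circles rather than as self-tangencies of a single circle centered at the singular point. Concretely, I would again start from the regular $2g$-gon $\mathcal{Q}_0=A_2A_4\cdots A_{4g}$ and attach equilateral triangles on each side to form $\mathcal{Q}=A_1A_2\cdots A_{4g}$, so that after identifying opposite sides we obtain a surface $X_{\mathcal{Q}}$ in $\mathcal{H}(2g-2)$ with a single singular point of cone angle $(4g-2)\pi$. The key modification is to use a \emph{two-coloring} of the construction: I would place one family of circular sectors so that they glue to a circle centered at the singular point, and a second circle (or family of sectors gluing to a second circle) positioned so that each of the $4g$ candidate tangency points now realizes a contact \emph{between} the two circles, i.e.\ an edge of the contacts graph rather than a loop.

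First I would fix the radii and sector angles exactly as before, $\tfrac{A_1A_2}{2}$ and the two angle values $\tfrac{\pi}{3}$ and $\bigl(\tfrac{2g-2}{2g}+\tfrac{2}{3}\bigr)\pi$, so that the midpoint-of-side tangencies (both the $2g$ pairs on opposite sides of $\mathcal{Q}$ and the $2g$ midpoints of the sides of $\mathcal{Q}_0$) still occur. Next I would introduce the second circle so that at each of these $4g$ tangency locations the contact is realized between the two vertices of the contacts graph; for instance, by centering a second circle at an interior point of $\mathcal{Q}$ (or by a second system of sectors that glue to a circle not through the singular point) and checking that its tangencies to the first circle land precisely at the previously identified $4g$ points. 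Then I would verify, exactly as in the previous proof via Gauss–Bonnet (Theorem \ref{Gausbonnet}) applied to the cone angle $(4g-2)\pi$, that the surface lies in $\mathcal{H}(2g-2)$, and count the tangency points surviving the identification: the $2g$ opposite-side pairs identify to $2g$ points and the $2g$ sides of $\mathcal{Q}_0$ contribute another $2g$, giving $4g$ edges between the two vertices.

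The main obstacle I anticipate is the placement of the second circle: in Theorem \ref{4gmultiloopsthm} all the sectors conveniently collapse to one circle at the singular point, so all contacts were automatically self-tangencies of that single circle. To convert these into multi-edges I must exhibit a genuinely second circle whose center and radius make it tangent to the first circle at all $4g$ points simultaneously while keeping disjoint interiors (Definition \ref{circlepackingdef}) and while respecting the identifications of $\mathcal{Q}$. The delicate part is confirming that one consistent choice of second center/radius meets the first circle at every one of the $4g$ glued tangency points at once, rather than only at some of them; I would resolve this by exploiting the rotational symmetry of $\mathcal{Q}_0$ and the fact that the first circle is centered at the symmetric center, so that a concentric or symmetrically placed second circle contacts it along a symmetric orbit of $4g$ points. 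Once that single configuration is produced and the tangency count is checked, the theorem follows, so I expect the remaining steps after the placement to be routine.
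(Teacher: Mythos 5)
There is a genuine gap, and it lies exactly where you flagged the ``delicate part.'' The $4g$ tangency points you inherit from the proof of Theorem \ref{4gmultiloopsthm} are points where two circular sectors of the \emph{same} circle $C$ meet tangentially; at such a point the two arcs of $C$ already occupy both sides of the common tangent line, so no second circle with interior disjoint from $C$ (Definition \ref{circlepackingdef}) can be tangent to $C$ at those same points. So ``converting'' the $4g$ self-tangencies into $4g$ edges by adding one more circle through the same contact points is geometrically impossible. The fallback you suggest --- a symmetrically placed (e.g.\ concentric) second circle tangent to the vertex sectors of $\mathcal{Q}$ --- also falls short of $4g$: the vertices $A_{2i}$ of $\mathcal{Q}_0$ and the triangle apexes $A_{2i+1}$ lie at two \emph{different} distances from the center (circumradius $R_0$ versus $R_0\cos\frac{\pi}{2g}+\frac{\sqrt{3}}{2}A_1A_2$), and a circle centered at the center of $\mathcal{Q}_0$ can be tangent to sectors of equal radius at only one of these two distances; the condition for reaching both reduces to $1-\cos\frac{\pi}{2g}=\sqrt{3}\sin\frac{\pi}{2g}$, which has no solution for $g\ge 1$. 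So your configuration yields at most $2g$ multi-edges. (Also, the first circle is centered at the singular point, not at the ``symmetric center'' of $\mathcal{Q}_0$; the center of $\mathcal{Q}_0$ is a regular point.)

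The fix, which is what the paper does, is to abandon the $2g$-gon-with-triangles polygon entirely and use the \emph{regular} $4g$-gon $\mathcal{Q}_{\text{reg}}=A_1\cdots A_{4g}$, whose opposite-side identification also lands in $\mathcal{H}(2g-2)$ with one singular point. Because all $4g$ vertices are now equidistant from the center, you can place a sector of angle $\frac{2g-1}{2g}\pi$ and radius $r<\frac{A_1A_2}{2}$ at every vertex (these glue to one circle $C_2$ at the singular point) and a circle $C_1$ of radius $\frac{A_1A_2}{2\sin\left(\frac{\pi}{4g}\right)}-r$ at the center; $C_1$ then meets $C_2$ once along each of the $4g$ radii to the vertices, giving $4g$ multi-edges. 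Your two-circle idea (singular-point circle plus central circle) is the right one; the error is in trying to realize it on the non-regular polygon and at the already-occupied tangency points.
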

\begin{proof}
    Consider the regular $4g$-gon $\mathcal{Q}_{\text{reg}}=A_1A_2\cdots A_{4g}.$ Identification of opposite sides of $\mathcal{Q}_{\text{reg}}$ yields a genus $g$ surface in $\mathcal{H}(2g-2).$ At each vertex $A_i$ of $\mathcal{Q}_{\text{reg}},$ center a circular sector of angle measure $\frac{2g-1}{2g}\pi$ and radius $r<\frac{A_1A_2}{2}.$ Next, center a circle $C_1$ with radius $\frac{A_1A_2}{2\sin \left(\frac{\pi}{4g} \right)}-r$ at the center of $\mathcal{Q}_{\text{reg}}.$
    
    After identification of opposite side lengths, each circle centered at a vertex of $\mathcal{Q}_{\text{reg}}$ gets mapped to a single circle $C_2,$ and there will be $4g$ tangency points between $C_1$ and $C_2$ corresponding to $4g$ vertices of $\mathcal{Q}_{\text{reg}}.$ Thus, we have constructed a packing whose contacts graph has $4g$ multi-edges on a surface in $\mathcal{H}(2g-2).$
\end{proof}
Similarly, on genus $g$ translation surfaces with 2 singular points of equal order, the possibility of certain packing complexities is demonstrated in Theorems \ref{2g+1multiloopsthm} and \ref{thmmultiedgesg}.
\begin{theorem}
\label{2g+1multiloopsthm}
   It is possible to realize $2g+1$ multi-loops on a contacts graph on at least 1 surface of the genus $g$ stratum $\mathcal{H}(g-1,g-1).$
\end{theorem}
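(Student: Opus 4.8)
The plan is to write down one explicit surface in $\mathcal{H}(g-1,g-1)$ carrying a single circle with $2g+1$ self-tangencies, in the spirit of the constructions of Theorems \ref{4gmultiloopsthm} and \ref{4gmultiedgesthm}. The polygon I would use is the regular $(4g+2)$-gon $\mathcal{Q}=A_1A_2\cdots A_{4g+2}$ with opposite sides identified by translation; since $4g+2$ is even, opposite sides are parallel and of equal length, so this produces a genuine translation surface $X_{\mathcal{Q}}$.

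First I would verify that $X_{\mathcal{Q}}\in\mathcal{H}(g-1,g-1)$. The single face has its $4g+2$ sides identified in opposite pairs, giving $2g+1$ edges, and an Euler-characteristic count shows the vertices fall into exactly two classes; by the rotational symmetry of $\mathcal{Q}$ these classes have equal size $2g+1$. Each vertex of the regular $(4g+2)$-gon has interior angle $\frac{4g\pi}{4g+2}$, so each class has total cone angle $(2g+1)\cdot\frac{4g\pi}{4g+2}=2g\pi$, i.e.\ two singular points of order $g-1$; Theorem \ref{Gausbonnet} then gives genus $g$, so $X_{\mathcal{Q}}\in\mathcal{H}(g-1,g-1)$ as required.

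Next I would place a single circle $C$ at the center $O$ of $\mathcal{Q}$ with radius equal to the apothem, so that $C$ is the incircle: it is tangent to all $4g+2$ sides, each at its midpoint, and its open disk lies inside $\mathcal{Q}$, so $C$ has embedded interior on $X_{\mathcal{Q}}$. Because the identification glues each side to the opposite one, the $4g+2$ midpoints are identified in opposite pairs to exactly $2g+1$ points of $X_{\mathcal{Q}}$. At each such point the two arcs of $C$ meet the (now interior) glued edge tangentially from opposite sides, which is precisely the local picture of two externally tangent circles; hence $C$ is tangent to itself there. This yields $2g+1$ self-tangencies, i.e.\ $2g+1$ loops at the vertex of $C$ in the contacts graph.

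The construction is short, so the point that needs the most care is the final step: checking that each identified pair of edge-midpoints really becomes an honest self-tangency of $C$ rather than a transverse self-crossing, and that the $2g+1$ such points are genuinely distinct. The cleanest justification is that the disk of radius equal to the apothem touches each glued edge from within along both of its sides, reproducing exactly the contact of two externally tangent disks. As a check, when $g=1$ the polygon $\mathcal{Q}$ is the regular hexagon and $X_{\mathcal{Q}}$ is the hexagonal torus, whose incircle is tangent to its three nearest lattice translates, confirming the count $2g+1=3$.
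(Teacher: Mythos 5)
Your proof is correct, and it uses the same underlying surface as the paper --- the regular $(4g+2)$-gon $A_1A_2\cdots A_{4g+2}$ with opposite sides identified --- but a genuinely different circle. The paper builds its circle out of $2g+1$ circular sectors centered at the even-numbered vertices (so the resulting circle on the surface is centered at one of the two singular points), with radius $A_1A_2\sin\bigl(\tfrac{g}{2g+1}\pi\bigr)$ chosen so that sectors at consecutive even vertices become tangent at the midpoints of the chords $A_{2i}A_{2i+2}$; the $2g+1$ self-tangencies thus occur in the interior of the polygon. You instead take the incircle, centered at the center of the polygon (a regular point), and harvest the $2g+1$ self-tangencies from the identified pairs of edge-midpoints on the glued boundary. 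Your version is arguably cleaner: the circle is an honest round circle in the plane with no sectors to assemble, it stays away from the singular points entirely, and the local picture at each glued midpoint (two arcs tangent to the glued edge from opposite sides) makes the external-tangency claim transparent; your $g=1$ hexagonal-torus sanity check is a nice confirmation. Two small remarks: your justification that the vertices fall into two classes is phrased via an Euler-characteristic count, which is a bit circular as written --- the direct route is to trace the identifications ($A_i\sim A_{i+2g+2}$ and $A_i\sim A_{i+2g}$, so $A_i\sim A_{i+2}$), getting the odd and even classes of $2g+1$ vertices each with cone angle $2g\pi$, and only then invoke Theorem \ref{Gausbonnet} for the genus; and note that the paper itself asserts membership in $\mathcal{H}(g-1,g-1)$ without proof, so your added verification is a welcome strengthening rather than a divergence.
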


\begin{proof}
        First, note that the surface formed by identifying opposite edges of a regular polygon $\mathcal{Q}_{\text{reg}}=A_1A_2\cdots A_{4g+2}$ is in $\mathcal{H}(g-1,g-1).$ Now, center $2g+1$ circular sectors with radii $A_1A_2 \cdot \sin\left(\frac{g}{2g+1}\pi\right)$ at each even-numbered vertex $A_{2i}.$ This will yield $2g+1$ tangency points in the interior $\mathcal{Q}_{\text{reg}}$, which correspond to $2g+1$ multi-loops on the contacts graph of the $\mathcal{H}(g-1,g-1)$ surface, as desired.
\end{proof}

\begin{theorem}
\label{thmmultiedgesg}
    It is possible to realize $2g+2$ multi-edges on a contacts graph on at least 1 surface of the genus $g$ stratum $\mathcal{H}(g-1,g-1).$
\end{theorem}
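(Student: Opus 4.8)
The plan is to mirror the construction of Theorem \ref{4gmultiedgesthm}, in which a single central circle tangent to every vertex-sector of a regular $4g$-gon produced $4g$ multi-edges, but now on the regular $(4g+2)$-gon $\mathcal{Q}_{\text{reg}} = A_1 A_2 \cdots A_{4g+2}$ already shown in the proof of Theorem \ref{2g+1multiloopsthm} to glue into a surface in $\mathcal{H}(g-1,g-1)$. First I would place a circular sector of radius $r < \frac{A_1A_2}{2}$ at each vertex and a circle $C_1$ of radius $\rho - r$ at the center, where $\rho$ is the circumradius, so that $C_1$ is tangent to each of the $4g+2$ vertex-sectors. Under the opposite-side identification the odd-indexed vertices glue to one order-$(g-1)$ cone point and the even-indexed vertices to the other, so the vertex-sectors assemble into exactly two cone circles $A$ and $B$, and $C_1$ is tangent to each of them once per incident vertex.

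The difficulty is that each parity class contains exactly $2g+1$ vertices, so this symmetric configuration yields only $2g+1$ multi-edges between $C_1$ and either cone circle, matching the multi-loop count of Theorem \ref{2g+1multiloopsthm} but falling one short of the target. To produce the extra edge I would break the dihedral symmetry. Using Remark \ref{equivalenceoftranslationsurfacesremark}, I would cut and reglue $\mathcal{Q}_{\text{reg}}$, or start from a tailored $(4g+2)$-gon, into a representation of a surface in $\mathcal{H}(g-1,g-1)$ in which one of the two cone points is incident to $2g+2$ of the polygon's corners and the other to only $2g$. Placing the $2g+2$ corners belonging to the larger cone point at a common distance from an interior point $O$ and centering $C_1$ at $O$ would then make $C_1$ tangent to all $2g+2$ sectors of that single cone circle, giving the desired $2g+2$ multi-edges.

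Two verifications would carry the argument. First, by the angle count underlying Theorem \ref{Gausbonnet}, any $(4g+2)$-gon whose corners glue into two cone points has total cone angle $4g\pi$; I would check that the asymmetric gluing splits this as $2g\pi + 2g\pi$, i.e.\ two points of order $g-1$, so that the surface indeed lies in $\mathcal{H}(g-1,g-1)$. Second, I would verify that the $2g+2$ sectors of the larger cone point can be made simultaneously tangent to $C_1$ with all interiors disjoint. I expect this second point to be the main obstacle: the regular polygon's symmetry, which guarantees simultaneous tangency for free, is exactly what forces the even $(2g+1)$/$(2g+1)$ split, so the construction must sacrifice that symmetry while still arranging the $2g+2$ relevant corners equidistantly from $O$ and keeping the remaining $2g$ sectors and $C_1$ from overlapping. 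Since loading $2g+2$ corners onto one cone point forces some corners of the smaller cone point to be reflex, the polygon will be nonconvex, and confirming that such a polygon exists for \emph{every} $g$ rather than just for small $g$ is the crux of the proof.
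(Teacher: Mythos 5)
Your proposal correctly diagnoses why the symmetric construction from Theorem \ref{4gmultiedgesthm} stalls at $2g+1$ on the regular $(4g+2)$-gon, but the step you yourself flag as the crux --- producing, for every $g$, a polygon representation of an $\mathcal{H}(g-1,g-1)$ surface in which $2g+2$ corners belong to one cone point \emph{and} lie on a common circle about an interior point $O$, with all sectors disjoint --- is never carried out, so the proof is incomplete. There is also a concrete combinatorial obstruction you should be aware of: for a $(4g+2)$-gon with opposite sides identified in the standard cyclic sense, the gluing forces $A_i \sim A_{i+2g+2}$, and since $\gcd(2g+2,\,4g+2)=2$ there are always exactly two vertex orbits of size $2g+1$ each. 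So the $2g+2$/$2g$ split you want cannot be reached by merely deforming the polygon's shape; it would require a genuinely different side-pairing, which your proposal leaves unspecified, on top of the concyclicity and non-overlap conditions that still need to be verified.

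The paper avoids all of this with a different and much simpler construction: take two regular $(2g+2)$-gons $\mathcal{Q}_1$ and $\mathcal{Q}_2$, glue them along one side and erase it to form a centrally symmetric $(4g+2)$-gon $\mathcal{Q}$, and inscribe a circle $C_i$ in each $\mathcal{Q}_i$. Neither circle contains a singular point; instead, the tangencies come from the sides. Each of the $2g+1$ remaining sides of $\mathcal{Q}_1$ is opposite in $\mathcal{Q}$ to a side of $\mathcal{Q}_2$, and the opposite-side identification matches the midpoint tangency of $C_1$ on the former with that of $C_2$ on the latter, giving $2g+1$ tangencies; the erased shared side contributes one more direct tangency between $C_1$ and $C_2$ in the plane, for a total of $2g+2$ multi-edges. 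If you want to salvage your cone-circle approach you would need to exhibit the asymmetric gluing explicitly; otherwise the two-inscribed-circles idea is the cleaner route.
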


\begin{proof}
    Consider 2 regular $(2g+2)$-sided polygons, $\mathcal{Q}_1$ and $\mathcal{Q}_2$. Choose a side of each $(2g+2)$-gon, glue $\mathcal{Q}_1$ and $\mathcal{Q}_2$ together along those 2 sides, and then remove the edge along which they were glued. The result is a $(4g+2)$-gon $\mathcal{Q}$, which, as discussed in the proof of Theorem \ref{2g+1multiloopsthm}, becomes a surface in $\mathcal{H}(g-1,g-1)$ upon identification of opposite edges. 
    
    Now, inscribe a circle $C_1$ in $\mathcal{Q}_1,$ and likewise inscribe a circle $C_2$ in $\mathcal{Q}_2$. Identifying opposite edges of $\mathcal{Q}$ yields $2g+2$ mutual tangencies between $C_1$ and $C_2$, corresponding to the $2g+2$ edges of $\mathcal{Q}_1.$ Thus, the contacts graph of the packing consisting of $C_1$ and $C_2$ on the surface formed by identifying opposite sides of $\mathcal{Q}$ has $2g+2$ multi-edges, satisfying the desired condition.
\end{proof}

\begin{corollary}
\label{H(1,1)multiloopsandedges}
    Up to $5$ multi-loops and $6$ multi-edges can be realized on a contacts graph on at least 1 surface in $\mathcal{H}(1,1).$
\end{corollary}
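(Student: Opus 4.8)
The plan is to obtain this corollary as the direct specialization of Theorems~\ref{2g+1multiloopsthm} and~\ref{thmmultiedgesg} to the genus $g = 2$ case. First I would observe that $\mathcal{H}(1,1)$ is exactly the stratum $\mathcal{H}(g-1,g-1)$ evaluated at $g = 2$: setting $g - 1 = 1$ forces $g = 2$, and a surface with two singular points of order $1$ has $\sum_i \delta(v_i) = 2$, so by Theorem~\ref{Gausbonnet} its Euler characteristic is $-2$ and its genus is $2$, confirming the identification.

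Next I would apply the two theorems in turn. Substituting $g = 2$ into Theorem~\ref{2g+1multiloopsthm} gives $2g + 1 = 5$, so the regular $(4g+2) = 10$-gon construction from that proof, with circular sectors centered at its even-numbered vertices, realizes $5$ multi-loops on some surface in $\mathcal{H}(1,1)$. Similarly, substituting $g = 2$ into Theorem~\ref{thmmultiedgesg} gives $2g + 2 = 6$, so the construction gluing two regular $(2g+2) = 6$-gons and inscribing a circle in each realizes $6$ multi-edges on some surface in $\mathcal{H}(1,1)$.

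There is no substantive obstacle here: both preceding theorems already furnish explicit polygon-and-sector constructions, so the corollary requires only the arithmetic substitution $g = 2$ and the verification that the two resulting counts are $5$ and $6$. I would note only that the statement asserts each bound is achievable on \emph{at least one} surface of the stratum, and the two constructions need not coincide — the multi-loop bound and the multi-edge bound may be realized on different surfaces in $\mathcal{H}(1,1)$ — so no single packing need exhibit both simultaneously.
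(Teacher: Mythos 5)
Your proposal is correct and matches the paper's proof exactly: the paper's entire argument is to substitute $g=2$ into Theorems~\ref{2g+1multiloopsthm} and~\ref{thmmultiedgesg}. Your additional checks (that $\mathcal{H}(1,1)=\mathcal{H}(g-1,g-1)$ at $g=2$ and that the two counts may be realized on different surfaces) are sound but not needed beyond what the paper states.
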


\begin{proof}
    Substitute $g=2$ in Theorems \ref{2g+1multiloopsthm} and \ref{thmmultiedgesg}.
\end{proof}

\section{Research Directions}
\label{researchdirections}

Theorems \ref{H(2)multiedgesthm} and \ref{H(2)multiloopsthm} provide a bound on the complexity of contacts graphs in $\mathcal{H}(2)$, while no such bound is presented for contacts graphs on $\mathcal{H}(1,1)$ surfaces in Corollary \ref{H(1,1)multiloopsandedges}. A natural research direction is therefore establishing such a bound in order to complete our results on complexity in the genus $2$ stratum. Progress can also be made in bounding the numbers of possible multi-loops and multi-edges on surfaces in the genus $g$ strata $\mathcal{H}(2g-2)$ and $\mathcal{H}(g-1,g-1),$ continuing the progress made in Theorems \ref{4gmultiloopsthm}, \ref{4gmultiedgesthm}, \ref{2g+1multiloopsthm}, and \ref{thmmultiedgesg}. 

Additionally, only partial progress was made towards describing equivalent packings that can be realized on distinct, same-stratum surfaces. It would be particularly interesting to investigate the existence of packings that can be realized on any surface in certain strata, and to give the most complex such packings in $\mathcal{H}(2),$ $\mathcal{H}(1,1),$ and potentially higher-genus strata.

\section{Acknowledgements}
I would like to sincerely thank my PRIMES-USA mentor, Professor Sergiy Merenkov, for his guidance during the reading and research phases of the program. Without him, this work would not have been possible. I am grateful to MIT PRIMES-USA for giving me the invaluable opportunity to conduct this research. I am also thankful to Doctor Tanya Khovanova for her comments on my initial reports and this paper.

\printbibliography
\end{document}